\def\C{\mathbb{C}}
\def\para{\mathcal{P}}
\def\rep{\mathcal{R}}
\def\PSL{PSL_2(\C)}
\def\tri{\mathcal{T}}
\newcommand{\co}{\colon\thinspace}
\newtheorem{theorem}{Theorem}[section]
\newtheorem{lemma}[theorem]{Lemma}
\newtheorem{proposition}[theorem]{Proposition}
\newtheorem{definition}[theorem]{Definition}
\newcommand{\prooflabel}{Proof}
\newcommand{\qedsymbol}{\rule[-0.5mm]{1.5mm}{3.0mm}}
\newcommand{\qed}{\nolinebreak\hfill\qedsymbol}
\newtheorem{proofthm}{\prooflabel}
\newenvironment{proof}{\begin{proofthm} \em}{\qed \end{proofthm}}
\begin{document}

\title{On the Hyperbolic Gluing Equations and Representations of Fundamental Groups of Closed $3$-Manifolds}
\author{Tian Yang}

\maketitle


\abstract{We show that for a representation of the fundamental group
of a triangulated closed 3-manifold (not necessarily hyperbolic)
into $\PSL$ so that any edge loop has non-trivial image under the
representation, there exist uncountably many solutions to the
hyperbolic gluing equation whose associated representations are
conjugate to the given representation, and whose volumes are equal
to the volume of the given representation.}

\section{Introduction}

In \cite{Th2}, Thurston introduced a system of algebraic equations
--called the \emph{hyperbolic gluing equations}¡ª for constructing
hyperbolic metrics on orientable $3$-manifolds with torus cusps. He
used solutions to the hyperbolic gluing equations to produce a
complete hyperbolic metric on the figure-eight knot complement in
the early stages of formulating his geometrization conjecture. On a
closed, oriented, triangulated $3$-manifold M, the
hyperbolic gluing equations can be defined in the same way: We
assign to each edge of each oriented tetrahedron in the
triangulation a shape parameter $z\in \mathbb{C}\setminus \{0,1\}$
such that

\begin{enumerate}[(a)]
\item opposite edges of each tetrahedron have the same shape parameter;
\item the three shape parameters assigned to three pairs of opposite
edges in each tetrahedron are $z,$ $\frac{1}{1-z}$ and
$1-\frac{1}{z}$ subject to an orientation convention; and
\item for each edge $e$ in M, if $z_1,...,z_k$ are
shape parameters assigned to the edges sharing $e$ as an edge, then we have
\begin{equation}\label{monodromy}
\prod_{i=1}^kz_i=1.
\end{equation}
\end{enumerate}

The equations (1) are termed \emph{the hyperbolic gluing equations}
, and the set of all solutions is the \emph{parameter space}
$\para(M).$ The space $\para(M)$ depends on the triangulation of
$M$. Given any $Z\in\para(M),$ the \emph{associated representation},
denoted $\rho_Z$, is defined by Yoshida in \cite{Yo}; and the
\emph{volume} of $Z$, denoted $Vol(Z)$, is well-defined using the
Lobachevsky-Milnor formula.
\\

In our joint work with F.Luo and S.Tillmann \cite{LTY}, we were able
to show the hyperbolic structure on a closed, oriented, triangulated
\emph{hyperbolic} $3$-manifold can be constructed from a solution to
the hyperbolic gluing equation using any triangulation with
essential edges. An edge in $\tri$ is termed \emph{essential} if it
is not a null homotopic loop in $M$. This is clear the case if it
has distinct end-point, but we allow the triangulation of $M$ to be
semi-simplicial (or singular), so that some or all edges may be
loops in $M$. It is well known that a closed $3$-manifold $M$ is
hyperbolic if and only if there exists a discrete and faithful
representation $\rho\co\pi_1(M)\rightarrow\PSL$ of the fundamental
group of $M$ into $\PSL$, and the main result of \cite{LTY} is to
construct a family of solutions to the hyperbolic gluing  equation
all of whose associated representations are discrete and faithful.
The proof makes a crucial use of Thurston's spinning construction
and a volume rigidity theorem attributed by Dunfield to Thurston,
Gromov and Goldman.
\\

Our main observation in the present paper is that the constrain that
the $3$-manifold $M$ is hyperbolic, or equivalently the
representation $\rho\co\pi_1(M)\rightarrow\PSL$ is discrete and
faithful, could be removed, and we have

\begin{theorem}\label{main}
Let $M$ be an oriented, closed $3$-manifold, $\tri$ be a
triangulation of $M$ so that each edge $e$ in $\tri$ is essential,
and $\rho\co\pi_1(M)\rightarrow\PSL$ be a representation of the
fundamental group of $M$ into $\PSL$ so that

\begin{equation}\label{T}
\rho([e])\neq 1,\ \text{for all loop}\ e\ \text{in}\ \tri.
\end{equation}

Then

\begin{enumerate}
\item there exist uncountably many solutions $Z_{\rho}$ to the hyperbolic gluing equation;

\item the associated representation
$\rho_{Z_{\rho}}$ is conjugate to $\rho$; and

\item $Vol(Z_{\rho})=Vol(\rho)$.
\end{enumerate}
\end{theorem}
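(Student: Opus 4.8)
The plan is to adapt Thurston's spinning construction as used in \cite{LTY}, observing that it uses only hypothesis \eqref{T} and never that $\rho$ is discrete or faithful; in particular, in contrast to \cite{LTY}, no volume rigidity theorem will be needed. Let $\widetilde M$ be the universal cover, with the induced triangulation $\widetilde\tri$ and the deck action of $\pi_1(M)$, and identify $\partial\mathbb{H}^3$ with the sphere on which $\PSL$ acts by M\"obius transformations. First I would choose, for each $\pi_1(M)$-orbit of vertices of $\widetilde\tri$, a point of $\partial\mathbb{H}^3$, and extend $\rho$-equivariantly to a map $f$ from the vertex set of $\widetilde\tri$ to $\partial\mathbb{H}^3$; since $\pi_1(M)$ acts freely, the extension exists and is determined by the chosen points. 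For a tetrahedron $\sigma$ of $\tri$ a lift $\widetilde\sigma$ has four vertices whose $f$-images, read in the cyclic order fixed by the orientation, have a well-defined cross-ratio $z_\sigma\in\C\setminus\{0,1\}$ as soon as they are distinct, and the tuple $Z_\rho=(z_\sigma)_\sigma$ is the proposed solution.

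The place where \eqref{T} and essentiality of edges enter is the simultaneous non-degeneracy of all the $z_\sigma$. Two vertices of a single lifted tetrahedron differ by a deck transformation $\gamma$ that projects to an edge of that tetrahedron, hence to a loop which is essential, so $\gamma\neq 1$ in $\pi_1(M)$, and \eqref{T} gives $\rho(\gamma)\neq 1$ in $\PSL$; a nontrivial element of $\PSL$ has at most two fixed points on $\partial\mathbb{H}^3$. Since $\tri$ has finitely many edges, the ``forbidden'' configurations of base points form a finite union of proper subvarieties of the product of spheres, so a generic choice — and there are uncountably many such — makes every $z_\sigma$ defined. Conditions (a) and (b) hold automatically for shape parameters read off from ideal vertices, and the edge condition \eqref{monodromy} follows by telescoping: the tetrahedra of $\widetilde\tri$ around a lift $\widetilde e$ of an edge $e$ can be cyclically ordered $\widetilde\sigma_1,\dots,\widetilde\sigma_k$ so that the shape parameter of $\widetilde\sigma_i$ along $\widetilde e$ is, up to convention, the cross-ratio of the $f$-images of the two endpoints of $\widetilde e$ against the $f$-images of the two vertices of $\widetilde\sigma_i$ off $\widetilde e$; using $[p,q;r,s]\,[p,q;s,t]=[p,q;r,t]$ and the fact that this chain closes up (the link of $\widetilde e$ in the manifold $\widetilde M$ is a circle), the product over $i$ collapses to $[p,q;r,r]=1$. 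Hence each admissible choice gives $Z_\rho\in\para(M)$, and since the $z_\sigma$ depend non-constantly on the base points, distinct choices give distinct solutions, proving part (1).

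For part (2), by construction Yoshida's developing map for $Z_\rho$ is the $\rho$-equivariant geodesic straightening $g\co\widetilde M\to\mathbb{H}^3$ sending each lifted tetrahedron onto the ideal geodesic simplex on the $f$-images of its vertices; thus $\rho_{Z_\rho}$ is the holonomy of $g$, namely $\rho$ up to conjugation, the only point to verify being that this agrees with the conventions of \cite{Yo}. For part (3), I would use that $\vol(\rho)$ is computed by any $\rho$-equivariant map: since the hyperbolic volume form $\omega$ is closed and $\mathbb{H}^3$ is contractible, any two $\rho$-equivariant maps $\widetilde M\to\mathbb{H}^3$ are equivariantly homotopic, so by Stokes' theorem ($M$ closed) the integral $\int_M g^\ast\omega$ is independent of the map and equals $\vol(\rho)$. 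On the other hand $g$ restricted to a tetrahedron $\sigma$ parametrizes an ideal simplex, so $\int_\sigma g^\ast\omega$ equals the signed volume $\vol(\Delta(z_\sigma))$ given by the Lobachevsky--Milnor formula; summing over the finitely many tetrahedra gives $\vol(\rho)=\int_M g^\ast\omega=\sum_\sigma\vol(\Delta(z_\sigma))=\vol(Z_\rho)$.

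I expect the main obstacle to be the simultaneous non-degeneracy step: making precise that the finitely many constraints coming from \eqref{T} and essential edges really carve out a co-uncountable subset of the space of choices of base points, and that the resulting points of $\para(M)$ are genuinely distinct and uncountably many (i.e.\ that the map from base points to shape parameters has uncountable image). A secondary technical point is the orientation and sign bookkeeping relating the geodesic straightening, the Lobachevsky--Milnor formula and the definition of $\vol(Z)$, together with confirming that the developing map produced by the recipe of \cite{Yo} is exactly the straightening $g$.
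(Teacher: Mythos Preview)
Your approach to parts (1) and (2) matches the paper's: build a $\rho$-equivariant map $F$ from the vertices of $\widetilde\tri$ to $S^2_\infty$ by a genericity argument based on~\eqref{T}, read off cross-ratios, verify the edge equation by telescoping, and identify Yoshida's pseudo-developing map with the resulting ideal straightening. One small slip in your non-degeneracy argument: two vertices of a lifted tetrahedron differ by a deck transformation only when they lie over the \emph{same} vertex of $M$, i.e.\ when the edge between them is a loop; that is the case governed by~\eqref{T}, while edges with distinct endpoints in $M$ are handled by a separate (and easier) genericity step, as the paper makes explicit.

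Part (3) is where you depart from the paper, and here there is a real gap. Your straightening $g$ is not a map $\widetilde M\to\mathbb H^3$: it sends every vertex to $S^2_\infty$, so neither Dunfield's definition of $\vol(\rho)$ nor your Stokes/homotopy-invariance argument applies to $g$ as stated. The integral $\int_M g^*\omega$ is only defined improperly, and equating it with $\vol(\rho)$ is exactly the statement that needs proof. The paper handles this by building an explicit spinning family $f_t\co\widetilde M\to\mathbb H^3$ of genuine $\rho$-equivariant maps into $\mathbb H^3$ (so that $\vol(\rho)=\sum_{\sigma}\vol_{\mathbb H^3}(f_t(\sigma))$ for every finite $t$ by Dunfield's definition) and then invoking Luo's theorem on the continuous extension of the hyperbolic simplex volume to the closure of the space of angle matrices in order to pass to the limit $t\to\infty$ and conclude $\vol(\rho)=\sum_{\sigma}\vol_{\mathbb H^3}(f_\infty(\sigma))=\vol(Z_\rho)$. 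Your outline can be completed along these lines, but as written it skips precisely this limiting step, which is the technical heart of the argument.
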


We note that, in the special case that $\tri$ is simplicial, i.e.,
every $3$-simplex in $\tri$ has distinct vertices, every
representation $\rho\co\pi_1(M)\rightarrow\PSL$ satisfies condition
(\ref{T}); and in the case that $\rho\co\pi_1(M)\rightarrow\PSL$ is
discrete and faithful, Theorem \ref{main} implies Theorem 1.1
of \cite{LTY} as a special case.
\\

The present paper is organized as follows. In section 2, some basic
definitions on hyperbolic geometry are reviewed. Theorem \ref{main}
is proven in section 3 using the spinning
construction summarized in \cite{LTY} and a theorem of Luo on the
continuous extension of the volume function \cite{Lu1}.

\section{The parameter space}

\subsection{The hyperbolic gluing equation and the volume of solutions}

If $\sigma$ is an oriented 3-simplex with edges from one vertex
labeled by $e_1,$ $e_2$ and $e_3$ so that the opposite edges have
the same labeling, then the cyclic order of $e_1,$ $e_2$ and $e_3$
viewed from each vertex depends only on the orientation of the
tetrahedron, i.e. is independent of the choice of the vertices. Note
that each pair of opposite edges $e_i$ corresponds to a normal
isotopy class of quadrilateral (\textit{normal quadrilateral} for
short) $q_i$ in $\sigma$ so that $q_1\rightarrow q_2\rightarrow
q_3\rightarrow q_1$ is the cyclic order induced by the cyclic order
on the edges from a vertex. To define hyperbolic gluing equation, we
need the following notation. Let $e$ be an edge in $\tri,$ and $q$ be a
normal quadrilateral in $\sigma.$ The index $i(q,e)$ is the integer
$0,$ $1$ or $2$ defined as follows. $i(q,e)=0$ if $e$ is not an edge
of $\sigma.$ $i(q,e)=1$ if $e$ is the only edge in $\sigma$ facing
$q$ and $i(q,e)=2$ if $e$ are the two edges in $\sigma$ facing $q.$
Let $Q$ be the set of normal quadrilaterals in $\tri,$ we have

\begin{definition}\label{ThGE}
Suppose $(M,\tri)$ is a triangulated oriented close $3$-manifold.
Thy hyperbolic gluing equation is defined for
$Z=(z_q)\in(\mathbb{C}\setminus\{0,1\})^Q$ so that
\begin{enumerate}[(a)]
\item for each edge $e$ in $\tri,$

$$\prod_{q\in Q} z_q^{i(q,e)}=1,$$

and
\item if $\sigma$ is a $3$-simplex in $\tri,$ and $q_1\rightarrow
q_2\rightarrow q_3\rightarrow q_1$ is the cyclic order of normal
quadrilaterals in $\sigma,$ then

$$z_{q_{i+1}}=\frac{1}{1-z_{q_i}},$$

where $q_{3+1}$ is understood to be $q_1.$
\end{enumerate}

The set of all solutions to the hyperbolic gluing equation is call
the parameter space, and is denoted by $\para(M)$.
\end{definition}
Let $z_{\sigma}=(z_{q_1},z_{q_2},z_{q_3})$ be the complex numbers
assigned to $q_i,$ $i\in\{1,2,3\},$ then we have

\begin{definition}\label{vol}
The volume of $z_{\sigma}$ is defined to be the sum of the
Lobachevsky functions

\begin{equation*}
\begin{split}
Vol(z_{\sigma})=&\sum_{i=1}^3\Lambda(\arg(z_{q_i}))\\
\doteq&\sum_{i=1}^3(-\int_0^{\arg(z_{q_i})}\ln
|2\sin t|dt);\\
\end{split}
\end{equation*}

and the volume of $Z=(z_{\sigma})\in\para(M)$ is defined by

$$Vol(Z)=\sum_{\sigma\in T}Vol(z_{\sigma}).$$
\end{definition}

\subsection{The shape parameters of an ideal tetrahedron}
Let $\overline{\mathbb{H}^3}=\mathbb{H}^3\cup S_{\infty}^2$ be the compactification of $\mathbb{H}^3$, where $S_{\infty}^2$ is the \emph{sphere at infinity}. We have

\begin{definition}\label{shape}
Let $\sigma$ be an ideal tetrahedron in $\overline{\mathbb{H}^3}$
with vertices $\{v_i\}\subset S_{\infty}^2,$ $i\in\{1,...,4\},$ and $e_{ij}$ be the edge from $v_i$ to $v_j$. Identifying
$S_{\infty}^2$ with $\mathbb{C}\cup\{\infty\},$ the shape parameter
of $\sigma$ at $e_{ij}$ is defined by the following cross-ratio

\begin{equation*}
\begin{split}
z_{ij}\doteq& (v_i,v_j;v_k,v_l)\\
=&\frac{v_i-v_k}{v_i-v_l}\cdot\frac{v_j-v_l}{v_j-v_k}\\
\end{split}
\end{equation*}

where $(i,j,k,l)$ is an even permutation of $(1,2,3,4)$ .
\end{definition}

A direct cross-ratio calculation shows the following well known

\begin{proposition}\label{wellknown}

\begin{enumerate}
\item For all $\{i,j\}, \{k,l\}\subset\{1,...,4\},$ $i,j\neq k,l,$

$$z_{ij}=z_{kl},\ \,$$

i.e., opposite edges share the same shape parameter, so we can
denote the shape parameter of $\sigma$ at $e_{ij}$ and $e_{kl}$ by
$z_q,$ where $q$ is the normal quadrilateral facing $e_{ij}$ and
$e_{kl},$ and
\item if $q_1\rightarrow q_2\rightarrow q_3\rightarrow q_1$ is the
cyclic order of normal quadrilaterals in $\sigma,$ then

$$z_{q_{i+1}}=\frac{1}{1-z_{q_i}},$$

where $q_{3+1}$ is understood to be $q_1.$
\end{enumerate}
\end{proposition}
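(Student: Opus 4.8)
The plan is to prove both parts by a direct computation with the cross-ratio of Definition~\ref{shape}, exploiting its invariance under $\PSL$ to put the four vertices in a convenient position.

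For part~(1), fix $\{i,j\}$ and let $\{k,l\}$ be the complementary pair, with $(i,j,k,l)$ an even permutation of $(1,2,3,4)$. The permutation carrying $(i,j,k,l)$ to $(k,l,i,j)$ is $(i\,k)(j\,l)$, a product of two transpositions and hence even, so $(k,l,i,j)$ is again an even permutation of $(1,2,3,4)$, and therefore $z_{kl}=(v_k,v_l;v_i,v_j)$. Writing out
$$z_{ij}=\frac{(v_i-v_k)(v_j-v_l)}{(v_i-v_l)(v_j-v_k)},\qquad z_{kl}=\frac{(v_k-v_i)(v_l-v_j)}{(v_k-v_j)(v_l-v_i)},$$
the four sign changes pairing numerator with numerator and denominator with denominator cancel, giving $z_{ij}=z_{kl}$. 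The same bookkeeping (the permutation $(i\,j)(k\,l)$ is even) shows $z_{ij}=z_{ji}$, so the common value depends only on the normal quadrilateral $q$ facing $e_{ij}$ and $e_{kl}$, and we may write it as $z_q$.

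For part~(2), use $\PSL$-invariance to send $v_1,v_2,v_3$ to $\infty,0,1$ and set $v_4=z\in\C\setminus\{0,1\}$. Choosing for each pair of opposite edges a representative ordering that is an even permutation, Definition~\ref{shape} and part~(1) evaluate to
$$z_{12}=z,\qquad z_{13}=\frac{1}{1-z},\qquad z_{14}=1-\frac1z,$$
and a one-line check confirms that $w\mapsto\frac{1}{1-w}$ cyclically permutes these three values as $z\to\frac{1}{1-z}\to1-\frac1z\to z$. It remains to identify this cyclic order with the one on $q_1,q_2,q_3$: since $q_i$ faces the edge pair labeled $e_i$, viewed from $v_1$ the cyclic order $q_1\to q_2\to q_3$ is the orientation-induced cyclic order of the edges $e_{12},e_{13},e_{14}$ emanating from $v_1$, and under the chosen normalization this is the standard fact that a positively oriented ideal tetrahedron has its fourth vertex in the upper half plane once the first three sit at $0,1,\infty$ (Möbius maps being orientation preserving on $S_{\infty}^2$). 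Combining the two descriptions yields $z_{q_{i+1}}=\frac{1}{1-z_{q_i}}$.

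The only point that needs genuine care is this orientation bookkeeping in part~(2) — confirming that the prescribed cyclic order of the normal quadrilaterals is the one under which consecutive shape parameters are related by $w\mapsto\frac{1}{1-w}$ and not by its inverse $w\mapsto1-\frac1w$. Everything else is routine algebra.
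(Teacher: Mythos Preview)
Your argument is correct and is exactly the ``direct cross-ratio calculation'' the paper alludes to; the paper itself gives no proof beyond that phrase, so you have simply filled in the details it omits. Your flagging of the orientation bookkeeping in part~(2) as the only delicate point is also on target.
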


For an ideal tetrahedron $\sigma$ with shape parameters
$z_{q_1},z_{q_2}$ and $z_{q_3},$ the hyperbolic volume is calculated
by Milnor as

\begin{equation*}\label{Milnor}
Vol_{\mathbb{H}^3}(\sigma)=\sum_{i=1}^3\Lambda(\arg(z_{q_i})).
\end{equation*}

We call an ideal tetrahedron $\sigma\subset\overline{\mathbb{H}^3}$ \emph{flat} if it lies in a totally geodesic plan. When $\sigma$ is flat, we have that $\{z_{q_i}\}$ are real numbers and
$Vol(\sigma)=0$.

\subsection{The associated representation}\label{associated}

Given a solution $Z\in \para(M)$ to the hyperbolic gluing equation
for a triangulated cusped $3$-manifold $(M,\tri)$ with essential
edges, the associated representation
$\rho_Z\co\pi_1(M)\rightarrow\PSL$ was described by Yoshida
\cite{Yo} via constructing the pseudo developing maps. For a closed
triangulated $3$-manifold with essential edges, the construction of
the associated representation is essentially the same as Yoshida's.
Namely, for each $Z\in\para(M)$, there is a continuous map
$D_Z\co\widetilde{M}\rightarrow\overline{\mathbb{H}^3}$ taking
$3$-simplices in $\widetilde{\tri}$ to ideal straight simplices, and
a representation $\rho_Z$ which makes it invariant. This
construction is described in details in Section 4.5, \cite{LTY}.

\section{The existence of solutions to the hyperbolic equation}

\subsection{The proof of 1. of Theorem \ref{main}}\label{existence}
\begin{proof}
Let $\pi\co\widetilde{M}\rightarrow M$ be the universal cover of
$M$, and $\widetilde{\tri}$ the triangulation of $\widetilde{M}$
induced from $\tri$. Let $V$ be the set of vertices of $\tri$.

We take any $\rho$-equivariant map $F\co \pi^{-1}(V)\rightarrow
S_{\infty}^2$ so that for any $3$-simplex $\sigma$ in $\tri$, the
four points $\{F(v)\ |\ v\ \text{is a vertex of}\ \sigma\}$ are
distinct. The existence of such $F$ is guaranteed by condition
(\ref{T}). Indeed, let $D\subset\widetilde{M}$ be a fundamental
domain of $M$ which is a union of $3$-simplices of
$\widetilde{\tri}$. Let $V=\{v_1,...,v_{|V|}\}$ and
$V_i=\pi^{-1}(v_i)\cap D$. We take a point $u_i\in V_i$ for each
$i\in\{1,...,|V|\}$. Then by the $\rho$-equivariance, $F(V_i)$
should be determined by $F(u_i)$. Namely, if $u_i'\in V_i$ with
$u_i'=\gamma\cdot u_i$ for some $\gamma\in\pi_1(M)$, then
$F(u_i')=\rho(\gamma)F(u_i).$ Let $e$ be an edge in $D$ such that
both of its vertices $w_1$, $w_2$ are in $V_i$ for some
$i\in\{1,...,|V|\}$, i.e., $w_j=\gamma_j\cdot u_i$ for some
$\gamma_j\in\pi_1(M)$, $j\in\{1,2\}$, we see that since by condition
(\ref{T}), $\rho([e])\neq1$, there are at most two $y\in
S_{\infty}^2$ such that $\rho([e])\cdot y=y.$ Therefore, for a
generic choice of $z\in S_{\infty}^2$,

$$\rho(\gamma_2)\cdot z=\rho([e])\cdot(\rho(\gamma_1)\cdot z))\neq\rho(\gamma_1)\cdot z.$$

Since there are in total finitely many edges in $D$,
for a generic choice of $(z_1,...,z_{|V|})\in(S_{\infty}^2)^V$, the
map defined by

$$F(u_i)\doteq z_i,\ \ i\in\{1,...,|V|\}$$

and

$$F(\gamma\cdot u_i)\doteq\rho(\gamma)\cdot F(u_i),\ \
\gamma\in\pi_1(M)$$

satisfies the property that for each edge $e$ of $D$ with vertices
$w_i$ and $w_2$ such that $w_1,w_2\in V_i$ for some $i\in\{1,...,|V|\}$,

$$F(w_2)=\rho([e])\cdot
F(w_1)\neq F(w_1).$$

Furthermore, since there are only finitely many vertices in $D$, for a generic choice of $(z_1,...,z_|V|)\in(S_{\infty}^2)^V$, the map $F$ satisfies that for each $e$ in $D$ with vertices $w_1$ and $w_2$,

$$F(w_2)\neq F(w_1).$$

Therefore, for each $3$-simpliex $\sigma$ in $\tri$, the four points
$\{F(v)\ |\ v\ \text{is a vertex of }\sigma\}$ are distinct.

For any $3$-simplex $\sigma$ of $\tri$, let $\widetilde{\sigma}$ in
$\widetilde{\tri}$ be a lift of $\sigma$. Then the four distinct
points $\{F(v)\ |\ v\ \text{is a vertex of}\ \sigma\}$ determine an
ideal tetrahedron $\sigma_{\infty}$ with them as vertices. We assign
the shape parameters of $\sigma_{\infty}$ to the corresponding
normal quadrilaterals of $\sigma$, and get an assignment
$Z_{\rho}\subset(\mathbb{C}\setminus\{0,1\})^Q$ of complex numbers
to the set of normal quadrilaterals in $\tri$. We claim that
$Z_{\rho}$ is a solution to the hyperbolic gluing equation.

By Proposition \ref{wellknown}, we see that $Z_{\rho}$ satisfies (b)
of Definition \ref{ThGE}. The verification of (a) is a cross-ratio
calculation which is exactly the same as in \cite{LTY}. We include
it here for the readers' convenience. Let $e\in E$, and
$\widetilde{e}\in\widetilde{\tri}$ a lift of $e$ with end points
$v$ and $w$. Let $\sigma_1,...,\sigma_k$ be tetrahedra in
$\widetilde{\tri}$ sharing $\widetilde{e}$ as an edge in a cyclic
order, and $q_i\subset\sigma_i$ be the normal quadrilateral facing
$\widetilde{e}$. Let $u_i$ and $u_{i+1}$ be the other two vertices
of $\sigma_i$ so that $u_i\in\sigma_{i-1}\cap\sigma_i$. We make a
convention that $u_{k+1}=u_1$. Let $\sigma_{i,\infty}$ be the ideal
tetrahedron determined by vertices $F(v),F(w),F(u_i)$ and
$F(u_{i+1})$, and $l$ be the geodesic connecting $F(v)$ and $F(w)$,
then $\{\sigma_{i,\infty}\}_{i=1}^k$ share $l$ as an edge in a
cyclic order. Without loss of generality, we can assume that
$F(v)=0$ and $F(w)=\infty\in S_{\infty}^2=\mathbb{C}\cup\{\infty\}$.
Suppose $z_i$ is the complex number assigned to $q_i$, i.e., the
shape parameter of $\sigma_{i,\infty}$ at $l$, then

\begin{equation*}
\begin{split}
\prod_{q\in Q}z_q^{i(q,e)}=&\prod_{i=1}^kz_i\\
=&\prod_{i=1}^k(0,\infty;F(u_i),F(u_{i+1}))\\
=&\prod_{i=1}^k\frac{F(u_i)}{F(u_{i+1})}\\
=&1,
\end{split}
\end{equation*}

which verifies (a).
\\

From the arbitrariness of $F$, we see that there are uncountably
many choices of $Z_{\rho}$.
\end{proof}

We call the solutions $Z_{\rho}\in\para(M)$ constructed above \emph{the solutions from the spinning construction}.

After we obtained the result, it was brought to our attention that
similar construction had also appeared a little earlier in the work
of Kashaeve, Korepanov and Martyushev \cite{KKM}.

\subsection{Spinning construction and the proof of 2. of Theorem
\ref{main}}\label{spinning}

According to Thurston's notes, Section 6.1, \cite{Th2}, any $k+1$
points $v_0,...,v_k$, $1\leqslant k\leqslant 3$, in $\mathbb{H}^3$
determine a \emph{straightening map} (or \emph{straight
$k$-simplex})
$\sigma_{v_0,...,v_k}\co\Delta^k\rightarrow\mathbb{H}^3$, whose
image is the convex hull of $v_0,...,v_k$. Similarly, any $k+1$
points $v_0,...,v_k$, $1\leqslant k\leqslant 3$, in $S_{\infty}^2$
determine an \emph{ideal straightening map} (or \emph{ideal straight
$k$-simplex}), see Section 2.2, \cite{LTY} for details. The (ideal)
straightening map is natural in the following sense (see also
\cite{LTY} for the proof).

\begin{proposition}\label{straight}
\begin{enumerate}
\item If $\Delta'$ is an $m$-face of $\Delta^k$ so that
$\sigma_{v_0,...,v_k}(\Delta')$ has vertices $v_{i_0},...,v_{i_m},$
then
$$\sigma_{v_0,...,v_k}|_{\Delta'}=\sigma_{v_{i_0},...,v_{i_m}}.$$

\item If $g\in Iso(\mathbb{H}^n),$ the group of isometries of
$\mathbb{H}^3,$ then
$$g\circ\sigma_{v_0,...,v_k}=\sigma_{g\cdot v_0,...,g\cdot v_k}.$$
\end{enumerate}
\end{proposition}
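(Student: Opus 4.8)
The plan is to prove (1) and (2) simultaneously by induction on $k$, starting from the inductive geodesic-coning definition of the straightening map: $\sigma_{v_0}$ is the constant map $v_0$, and for $k\ge1$ one writes $x\in\Delta^k$ uniquely as $x=t_0e_0+(1-t_0)y$ with $t_0\in[0,1]$ and $y$ in the codimension-one face of $\Delta^k$ spanned by $e_1,\dots,e_k$, identified in the standard affine way with $\Delta^{k-1}$; then $\sigma_{v_0,\dots,v_k}(x)$ is the point determined by the parameter $t_0$ on the geodesic joining $\sigma_{v_1,\dots,v_k}(y)$ (at $t_0=0$) to $v_0$ (at $t_0=1$), for the canonical parametrization of that geodesic --- arc length when the endpoints are finite, and the isometry-equivariant reparametrized-ray (resp. line) convention of Section~2.2 of \cite{LTY} in the ideal case. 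The case $k=0$ is trivial, and $k=1$ reduces to the observations that a parametrized geodesic has only its two endpoints as proper faces (giving (1)) and that an isometry carries a geodesic to the geodesic between the images and preserves its canonical parametrization (giving (2)).

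For the inductive step of (2), let $g$ be an isometry and $x=t_0e_0+(1-t_0)y$. Then $g$ carries the geodesic $[\sigma_{v_1,\dots,v_k}(y),v_0]$ to $[g\,\sigma_{v_1,\dots,v_k}(y),\,gv_0]$, and $g\,\sigma_{v_1,\dots,v_k}(y)=\sigma_{gv_1,\dots,gv_k}(y)$ by the inductive hypothesis; since $g$ preserves the canonical parametrization, $g(\sigma_{v_0,\dots,v_k}(x))$ is the point at parameter $t_0$ on $[\sigma_{gv_1,\dots,gv_k}(y),\,gv_0]$, which is $\sigma_{gv_0,\dots,gv_k}(x)$ by definition. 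For the inductive step of (1), let $\Delta'$ be the $m$-face on vertices $e_{i_0},\dots,e_{i_m}$ with $i_0<\dots<i_m$. If $i_0\ge1$, then $\Delta'$ lies in the face opposite $e_0$, so on $\Delta'$ we have $t_0=0$ and $\sigma_{v_0,\dots,v_k}|_{\Delta'}=\sigma_{v_1,\dots,v_k}|_{\Delta'}$, which equals $\sigma_{v_{i_0},\dots,v_{i_m}}$ by the inductive hypothesis for $\sigma_{v_1,\dots,v_k}$. If $i_0=0$, write $x\in\Delta'$ as $x=t_0e_0+(1-t_0)y$ with $y$ in the face of $\Delta^{k-1}$ on $e_{i_1},\dots,e_{i_m}$; the inductive hypothesis gives $\sigma_{v_1,\dots,v_k}(y)=\sigma_{v_{i_1},\dots,v_{i_m}}(y')$, with $y'$ the image of $y$ under the standard identification of that face with $\Delta^{m-1}$, so $\sigma_{v_0,\dots,v_k}(x)$ is the point at parameter $t_0$ on $[\sigma_{v_{i_1},\dots,v_{i_m}}(y'),\,v_0]$ --- exactly $\sigma_{v_0,v_{i_1},\dots,v_{i_m}}$ evaluated at the image of $x$ in $\Delta^m$.

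The only genuinely delicate point is the bookkeeping of the standard affine identifications of the faces of $\Delta^k$ with lower-dimensional standard simplices and their compatibility with the coning splitting $x=t_0e_0+(1-t_0)y$, together with having pinned down, in the ideal case, an isometry-equivariant parametrization of the geodesic ray or line in question --- both routine once the conventions of \cite{LTY} are in place. When all vertices are finite one can instead sidestep the induction by working in the hyperboloid model $\mathbb{H}^3\subset\mathbb{R}^{3,1}$, where the straightening can be realized as $t\mapsto\bigl(\textstyle\sum_i t_iv_i\bigr)\big/\sqrt{-\langle\sum_i t_iv_i,\sum_i t_iv_i\rangle}$: statement (1) then says only that setting the omitted barycentric coordinates to zero recovers the straightening of the remaining vertices, and statement (2) is immediate from the fact that $g\in O^+(3,1)$ is linear and preserves the Lorentzian form.
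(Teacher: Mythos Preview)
Your argument is correct. Note, however, that the paper does not actually give its own proof of this proposition: it simply states the result and refers the reader to \cite{LTY} for the details. So there is no in-paper argument to compare against; what you have written is precisely the kind of inductive geodesic-coning verification that the reference is meant to supply, together with the standard hyperboloid-model shortcut in the finite-vertex case. Both are routine and match the conventions invoked in the paper.
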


To prove 2. of Theorem \ref{main}, we need the following technical
Lemma whose proof is contained in \cite{LTY}.

\begin{lemma}\label{tech}
Let $\{\sigma_t\co\Delta^k\rightarrow\mathbb{H}^3\ \
t\in\mathbb{R}_{\geqslant0}\}$ be a family of straight $k$-simplices
so that the $i$-th vertex $v_{i,t}$ of $\sigma_t$ lies in a geodesic
ray $l_i$ and $v_{i,t}$ moves toward the end point $v_l^*$ of $l_i$
at unit speed, i.e., $ d(v_{i,0},v_{i,t})=t.$ If $v_0^*,...,v_k^*$
are pairwise distinct, then as $t$ tends to $\infty$ the family
$\{\sigma_t\}$ converges pointwise to an ideal straight $k$-simplex
$\sigma_{\infty}\co\Delta^k\rightarrow\overline{\mathbb{H}^3}$ whose
vertices are $v_0^*,...,v_k^*.$
\end{lemma}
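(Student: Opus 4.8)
The plan is to work in the hyperboloid (Minkowski) model of $\mathbb{H}^3$, in which isometries act linearly and the straightening map has a clean linear description, so that the spinning limit reduces to a short computation together with a standard fact about geodesic rays. Recall that in this model a finite straight $k$-simplex with unit time-like vertex vectors $v_0,\dots,v_k$ can be written, for $(t_0,\dots,t_k)\in\Delta^k$, as
$$\sigma_{v_0,\dots,v_k}(t_0,\dots,t_k)=\frac{\sum_{i=0}^k t_i v_i}{\big\|\sum_{i=0}^k t_i v_i\big\|},$$
where $\|\cdot\|$ denotes the Lorentzian norm on the future time-like cone, and the ideal straight $k$-simplex attached to future null representatives $\xi_0,\dots,\xi_k$ of pairwise distinct ideal points is the analogous map $(t_0,\dots,t_k)\mapsto\big[\sum_i t_i\xi_i\big]$: a positive combination of two or more pairwise non-proportional future null vectors is future time-like, so this lands in $\mathbb{H}^3$ whenever at least two of the $t_i$ are positive, while at the vertex $e_j$ of $\Delta^k$ it equals the ideal point $[\xi_j]$. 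The first thing I would do is confirm that this description agrees with (or is naturally equivalent to) the inductive coning definition of the straightening used in Section 2.2 of \cite{LTY}; this matching of conventions is the step I expect to require the most care, since the convergence assertion is sensitive to the parametrization.

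Next I would coordinatize the moving vertices. Writing $w_i$ for the unit tangent vector at $v_{i,0}$ pointing along $l_i$, the point at distance $t$ is $v_{i,t}=\cosh(t)\,v_{i,0}+\sinh(t)\,w_i$, and $\xi_i:=v_{i,0}+w_i$ is a future null vector with $[\xi_i]=v_i^*$; hence $2e^{-t}v_{i,t}=(1+e^{-2t})v_{i,0}+(1-e^{-2t})w_i\to\xi_i$ as $t\to\infty$. Since the $v_i^*$ are pairwise distinct, the $\xi_i$ are pairwise non-proportional, so $\sigma_\infty(t_0,\dots,t_k):=\big[\sum_i t_i\xi_i\big]$ is a well-defined ideal straight $k$-simplex with vertices $v_0^*,\dots,v_k^*$; this is the candidate limit.

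It then remains to show, for each fixed $x=(t_0,\dots,t_k)\in\Delta^k$, that $\sigma_t(x)\to\sigma_\infty(x)$ in $\overline{\mathbb{H}^3}$, and I would split into the two cases that exhaust $\Delta^k$. If $x$ is a vertex $e_j$, then $\sigma_t(e_j)=v_{j,t}$ traverses the geodesic ray $l_j$ toward $v_j^*$, and convergence is the standard fact that a geodesic ray converges in the compactification to its endpoint. If $x$ has at least two positive coordinates, then $2e^{-t}\sum_i t_i v_{i,t}\to\sum_i t_i\xi_i$, a nonzero future time-like vector; since $y\mapsto y/\|y\|$ is continuous on the time-like cone and invariant under positive scaling, $\sigma_t(x)=\frac{\sum_i t_i v_{i,t}}{\|\sum_i t_i v_{i,t}\|}\to\frac{\sum_i t_i\xi_i}{\|\sum_i t_i\xi_i\|}=\sigma_\infty(x)$, convergence taking place inside $\mathbb{H}^3\subset\overline{\mathbb{H}^3}$. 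The remaining obstacle is purely bookkeeping: verifying that the two cases are compatible with the topology of the compactification and that the $\sigma_\infty$ so obtained is literally the ideal straightening in the sense of \cite{LTY}. If one prefers to avoid the Minkowski model, the same statement follows by induction on $k$ through the coning structure, with the base case $k=1$ being exactly the geodesic-ray fact; but that route is more delicate, because the cone apex escapes to $v_0^*\in S_\infty^2$ while the opposite face converges to an ideal simplex, and one must control the resulting cone by hand.
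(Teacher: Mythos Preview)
The paper does not actually prove this lemma; it merely states that the proof is contained in \cite{LTY}. So there is nothing in the present paper to compare your argument against directly.

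On its own merits, your hyperboloid-model argument is correct and clean: the rescaling $2e^{-t}v_{i,t}\to\xi_i$ together with continuity of the projectivization on the closed future cone gives pointwise convergence immediately, and your case split (vertex vs.\ at least two positive barycentric coordinates) is exhaustive. The one genuine point of care is the one you already flag: the linear normalization $\big(\sum t_i v_i\big)/\|\sum t_i v_i\|$ and the iterated geodesic-coning parametrization used in \cite{LTY} have the same image but are \emph{not} the same map from $\Delta^k$ (already for $k=2$ the intermediate normalization in the coning step reweights the coefficients). Thus your argument, as written, proves convergence of the \emph{linear} straightening to the \emph{linear} ideal simplex; to match the lemma verbatim one must either check that \cite{LTY}'s conventions coincide with the linear ones, or rerun the same limit through the inductive coning description (your proposed alternative), which is straightforward once one observes that coning from a point escaping along a geodesic ray converges to coning from its ideal endpoint. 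For the downstream use in this paper, namely assembling a $\rho$-equivariant $f_\infty$ that agrees on shared faces, either convention works, since the face-compatibility (Proposition~\ref{straight}) holds for both.
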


\begin{proof}\textbf{\hspace{-10pt} of 2. of Theorem \ref{main}}

We take an arbitrary $\rho$-equivariant map
$f\co\widetilde{M}\rightarrow\mathbb{H}^3$ and apply the following
spinning construction. Let $D\subset\widetilde{M}$ be a fundamental
domain of $M$ which is a union of $3$-simplices of
$\widetilde{\tri}$ with some $0$-, $1$- and $2$-faces removed so
that $\pi|_D\co D\rightarrow M$ is one-to-one and onto, and $\tri_D$
be the triangulation of $D$ restricted from $\widetilde{\tri}$. Let
$V_D$ be the set of vertices of $\tri_D$. For each $v\in V_D$, let
$l_v$ be a geodesic in $\mathbb{H}^3$ passing through $f(v)$
$F(v)\in S^2_{\infty}$ as one of its end-points. We parameterize
$l_v\co(-\infty,\infty)\rightarrow\mathbb{H}^3$ so that
$l_v(0)=f(v)$, $\|l_v'(t)\|_{\mathbb{H}^3}=1,\ \forall
t\in(-\infty,\infty)$, and $l_v(t)\rightarrow F(v)$ as $t\rightarrow
+\infty$, and define a family of piecewise smooth $\rho$-equivariant
maps $f_t\co\widetilde{M}\rightarrow\mathbb{H}^3$, $t\in[0,\infty)$
as follows. Define

$$f_t(v)=exp_v(t\cdot l_v'(0)),\ \forall v\in V_D,$$ and

$$f_t(\gamma\cdot v)=\rho(\gamma)\cdot f_t(v),\ \forall \gamma\in\pi_1(M),\ v\in V_D.$$

Extend $f_t$ to the $1$-, $2$- and $3$-simplices of
$\widetilde{\tri}$ by straightening maps. By 1. of Proposition
\ref{straight}, $f_t$ is well defined, and by 2. of Proposition
\ref{straight}, $f_t$ is $\rho$-equivariant. From the definition of
$f_t$, we see that for each vertex $\widetilde{v}$ of $\widetilde{\tri}$, $f_t(\widetilde{v})$ approaches to $F(\widetilde{v})\in S_{\infty}^2$, and for each face $\widetilde{\sigma}$ of $\widetilde{\tri}$, $f_t(\widetilde{\tri})$ lies in a totally geodesic plane.

By Lemma \ref{tech}, $f_t\co\widetilde{M}\rightarrow\mathbb{H}^3$ pointwise converges to a piecewise smooth $\rho$-equivariant map
$f_{\infty}\co\widetilde{M}\rightarrow\overline{\mathbb{H}^3}$ such
that
\begin{enumerate}
\item $\forall \widetilde{v}\in \pi^{-1}(V)$, $f_{\infty}(\widetilde{v})=F(\widetilde{v})$; and

\item $f_{\infty}(\widetilde{M}\setminus\pi^{-1}(V))\subset\mathbb{H}^3$.
\end{enumerate}

Given the solution $Z_{\rho}$ to the hyperbolic gluing equation, $f_{\infty}$ can be regarded as the pseudo developing map described in Section \ref{associated} which gives rise to the associated representation. Tautologically the map
$f_{\infty}\co\widetilde{M}\setminus\pi^{-1}(V)\rightarrow\mathbb{H}^3$
is $\rho_{Z_{\rho}}$-equivariant. Therefore, $f_{\infty}$ is both $\rho$- and
$\rho_{Z_{\rho}}$-equivariant, and for all $\gamma\in\pi_1(M)$ and
$x\in\widetilde{M}\setminus\pi^{-1}(V)$; and we have

\begin{equation*}
\begin{split}
\rho_{Z_{\rho}}(\gamma)\cdot f_{\infty}(x)=&f_{\infty}(\gamma\cdot x)\\
=&\rho(\gamma)\cdot f_{\infty}(x),\\
\end{split}
\end{equation*}

i.e.,
$\rho_{Z_{\rho}}(\gamma)|_{f_{\infty}(\widetilde{M}\setminus\pi^{-1}(V))}=\rho(\gamma)|_{f_{\infty}(\widetilde{M}\setminus\pi^{-1}(V))}$.
It is clear that $f_{\infty}(\widetilde{M}\setminus\pi^{-1}(V))$
contains more than four points. Indeed, for and $3$-simplex
$\widetilde{\sigma}$ in $\widetilde{\tri}$, the interior of the
ideal tetrahedron $f_{\infty}(\widetilde{\sigma})$ contains an open
subset of a totally geodesic plane (generically, the interior of the
ideal tetrahedron $f_{\infty}(\widetilde{\sigma})$ is itself open in
$\mathbb{H}^3$, and the only ``bad'' extremal case happens only if
that for any $\widetilde{\sigma}$ in $\widetilde{\tri}$,
$f_{\infty}(\widetilde{\sigma})$ is flat). Therefore,
$$\rho_{Z_{\rho}}(\gamma)=\rho(\gamma)\in\PSL,\ \forall\gamma\in\pi_1(M),$$ i.e.,
$$\rho_{Z_{\rho}}=\rho\co\pi_1(M)\rightarrow\PSL.$$
\end{proof}

Let $\rep(M)$ be the set of representations $\rho\co\pi_1(M)\rightarrow\PSL$. As we pointed out in the introduction, in the case that the triangulation $\tri$ is simplicial, every $\rho\in\rep(M)$ satisfies condition (\ref{T}), and we have

\begin{theorem}
If $M$ is a closed, oriented $3$-manifold, and $\tri$ is a simplicial triangulation of $M$, then the map $Y\co\para(M)\rightarrow\rep(M)$ defined by $Y(Z)=\rho_Z$ is surjective.
\end{theorem}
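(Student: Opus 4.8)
The plan is to observe that this final theorem is an immediate corollary of Theorem~\ref{main} together with the remark, already noted in the introduction, that when $\tri$ is simplicial every representation automatically satisfies condition~(\ref{T}). So the proof is essentially a two-line reduction, and the real content has been carried out earlier.

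First I would recall that $\tri$ being simplicial means every $3$-simplex of $\tri$ has four distinct vertices; in particular no edge of $\tri$ is a loop in $M$. Hence the hypothesis ``$\rho([e])\neq 1$ for all loops $e$ in $\tri$'' is vacuously true for \emph{every} $\rho\in\rep(M)$, and likewise every edge is essential (a non-loop edge cannot be a null-homotopic loop). Therefore the hypotheses of Theorem~\ref{main} are satisfied by an arbitrary $\rho\in\rep(M)$.

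Next I would apply Theorem~\ref{main} to this arbitrary $\rho$: part~(1) produces a solution $Z_\rho\in\para(M)$ to the hyperbolic gluing equation, and part~(2) asserts that the associated representation $\rho_{Z_\rho}=Y(Z_\rho)$ is conjugate to $\rho$ (in fact, as the proof of part~2 shows, equal to $\rho$ after the chosen equivariant setup, but conjugacy suffices). One should be slightly careful about what ``surjective'' means here: if one wants literal equality $Y(Z_\rho)=\rho$ rather than conjugacy, this is exactly what the proof of part~2 of Theorem~\ref{main} delivers, since $f_\infty$ is simultaneously $\rho$- and $\rho_{Z_\rho}$-equivariant and its image spans more than a geodesic, forcing $\rho_{Z_\rho}=\rho$ in $\PSL$. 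Either way, every element of $\rep(M)$ lies in the image of $Y$, which is the claim.

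I do not anticipate a genuine obstacle, since all the work is in Theorem~\ref{main}. The only point requiring a sentence of care is the vacuity argument: one must check that the simplicial hypothesis really does kill all edge loops, which is immediate from the definition of a simplicial triangulation, and that ``essential'' is automatic for the same reason. With that in hand the statement follows formally. \qed
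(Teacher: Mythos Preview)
Your proposal is correct and matches the paper's approach exactly: the paper presents this theorem without a separate proof, simply prefacing it with the observation (made in the introduction) that a simplicial triangulation has no edge loops, so condition~(\ref{T}) holds vacuously for every $\rho\in\rep(M)$ and Theorem~\ref{main} applies. Your extra remark distinguishing conjugacy from equality is a nice touch---since $\rep(M)$ is defined as the set of representations rather than conjugacy classes, literal surjectivity does require the equality $\rho_{Z_\rho}=\rho$ that the proof of part~2 actually delivers.
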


By 2. of Theorem \ref{main}, we see that our construction is the inverse construction of Yoshida's as described in Section \ref{associated}, and we have

\begin{theorem}
If $(M,\tri,\rho)$ satisfies the condition of Theorem \ref{main},
then all the solutions $Z_{\rho}$ of the hyperbolic gluing equation
such that $\rho_{Z_{\rho}}$ is conjugate to $\rho$ are from the
spinning construction.
\end{theorem}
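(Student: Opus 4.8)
The plan is to reverse the spinning construction of Section~\ref{existence}: starting from a solution $Z\in\para(M)$ whose associated representation is conjugate to $\rho$, I will extract a $\rho$-equivariant map $F\co\pi^{-1}(V)\to S^2_{\infty}$ with the distinctness property used in Section~\ref{existence}, and then verify that the solution produced from $F$ by the spinning construction is precisely $Z$.

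First I would recall, from Section~\ref{associated} (see \cite{Yo} and Section~4.5 of \cite{LTY}), that for any $Z\in\para(M)$ the associated representation $\rho_Z$ comes with a continuous, $\rho_Z$-equivariant pseudo developing map $D_Z\co\widetilde M\to\overline{\mathbb H^3}$ which carries each $3$-simplex $\widetilde\sigma$ of $\widetilde\tri$ to the ideal straight simplex spanned by the images of its vertices, and for which the shape parameter of $D_Z(\widetilde\sigma)$ at a normal quadrilateral $q$ equals $z_q$. Now suppose $Z$ is a solution with $\rho_Z$ conjugate to $\rho$, say $g\,\rho_Z\,g^{-1}=\rho$ for some $g\in\PSL$. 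Replacing $D_Z$ by $g\circ D_Z$ and invoking 2.\ of Proposition~\ref{straight} together with the invariance of the cross-ratio under $\PSL$, we may assume that $D_Z$ is $\rho$-equivariant and still realizes the shape parameters $z_q$.

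Next I would set $F\doteq D_Z|_{\pi^{-1}(V)}\co\pi^{-1}(V)\to S^2_{\infty}$; it is $\rho$-equivariant because $D_Z$ is. For each $3$-simplex $\sigma$ of $\tri$ with a lift $\widetilde\sigma$, the four points $\{F(v)\mid v\text{ a vertex of }\widetilde\sigma\}$ are the vertices of the ideal tetrahedron $D_Z(\widetilde\sigma)$, whose shape parameters all lie in $\C\setminus\{0,1\}$; since an ideal tetrahedron with two coinciding vertices has all of its cross-ratio shape parameters in $\{0,1,\infty\}$, these four points are pairwise distinct. Hence $F$ is exactly of the type fed into the spinning construction of Section~\ref{existence}. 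Applying that construction to this $F$ means: for each $3$-simplex $\sigma$ pick a lift $\widetilde\sigma$, form the ideal tetrahedron $\sigma_{\infty}$ on the four distinct points $\{F(v)\}$, and assign its shape parameters to the normal quadrilaterals of $\sigma$. But $\sigma_{\infty}=D_Z(\widetilde\sigma)$, so the numbers assigned are exactly the $z_q$; and the assignment does not depend on the chosen lift, by $\rho$-equivariance of $F$ and invariance of the cross-ratio. Therefore the spinning construction applied to $F$ recovers $Z$, i.e.\ $Z$ is a solution from the spinning construction.

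The one point that genuinely needs care is the first step: that the pseudo developing map attached to $Z$ really does have $z_q$ as the shape parameter of $D_Z(\widetilde\sigma)$ at $q$. This is, however, built into Yoshida's definition of $\rho_Z$ recalled in Section~\ref{associated}, since $D_Z$ is assembled from ideal straight tetrahedra with the prescribed shapes; the remaining ingredient, that $z_q\notin\{0,1\}$ forces the vertex images to be distinct, is the elementary cross-ratio observation noted above.
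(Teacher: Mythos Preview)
Your argument is correct and is exactly what the paper intends: the paper does not give a separate proof of this theorem but simply remarks, immediately before stating it, that ``our construction is the inverse construction of Yoshida's as described in Section~\ref{associated}.'' Your proposal spells out precisely this inversion---restricting the pseudo developing map $D_Z$ to $\pi^{-1}(V)$ (after conjugating into $\rho$-equivariance) to obtain an admissible $F$, and checking that the spinning construction applied to this $F$ returns $Z$---so it is the same approach, made explicit.
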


\subsection{Continuous extension of the volume function and the proof of 3. of Theorem \ref{main}}

Given a hyperbolic $3$-simplex $\sigma$ with vertices $v_1,...,v_4$,
the $i$-th face is defined to be the $2$-simplex facing $v_i$. The
dihedral angle between the $i$-th and $j$-th faces is denoted by
$a_{ij}(\sigma)$. As a convention, we define $a_{ii}(\sigma)=\pi$,
and call the symmetric matrix $[a_{ij}(\sigma)]_{6\times6}$ the
angle matrix of $\sigma$. It is well known that the angle matrix
$[a_{ij}(\sigma)]_{6\times6}$ determines $\sigma$ up to isometry.

To prove 3. of Theorem \ref{main}, we need the following theorem of
Luo \cite{Lu1}.

\begin{theorem}\textbf{(Luo)}\label{Luo}
Let $X\subset\mathbb{R}^{6\times6}$ be the space of angle matrices
of all hyperbolic $3$-simplices. The volume function $V\co
X\rightarrow \mathbb{R}$ can be extended continuously to the closure
of $X$ in $\mathbb{R}^{6\times6}$.
\end{theorem}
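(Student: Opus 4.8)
The plan is to reduce the theorem to a statement about convergence of volumes along degenerating sequences of honest compact tetrahedra, and then to control that convergence stratum by stratum on the boundary. Since every dihedral angle of a hyperbolic $3$-simplex lies in $[0,\pi]$, the set $X$ is bounded, so $\overline{X}$ is compact; hence it suffices to prove that for each $A\in\overline{X}\setminus X$ and each sequence $A_n\in X$ with $A_n\to A$, the numbers $V(A_n)$ converge to a limit depending only on $A$ --- a routine diagonal argument then produces the desired continuous extension $\overline{V}\co\overline{X}\to\mathbb{R}$. A first useful input is that $V$ is bounded on $X$, by the classical fact (Milnor) that a hyperbolic tetrahedron has volume at most that of the regular ideal tetrahedron.

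The heart of the argument is an explicit formula for $V$ in terms of the angle matrix --- for instance the decomposition of a hyperbolic tetrahedron into six orthoschemes combined with Coxeter's expression for the volume of an orthoscheme through the Lobachevsky function $\Lambda$, or the dilogarithm formula of Murakami--Yano and Cho--Kim. In any such form $V$ is built from the entries $a_{ij}$ using $\Lambda$ --- which is continuous, bounded and $\pi$-periodic on all of $\mathbb{R}$ --- together with algebraic functions of the Gram matrix $G=(-\cos a_{ij})_{4\times4}$. Continuity can therefore fail only where those algebraic functions degenerate, i.e.\ on the boundary strata of $\overline{X}$, and I would classify and treat these: (i) where $G$ drops to rank $\leqslant3$ the tetrahedron collapses to a flat, Euclidean-type limit, and one checks the $\Lambda$-terms cancel so that $V\to0$, compatibly with the flat case recorded after Definition~\ref{shape}; (ii) where one or more vertices escape to or beyond $S_\infty^2$, the tetrahedron converges to a generalized (ideal or truncated) hyperbolic tetrahedron of finite volume, and one checks that the formula converges to that finite volume using continuity of $\Lambda$.

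The step I expect to be the main obstacle is making (i)--(ii) work \emph{uniformly}, so that the limit is genuinely independent of the approaching sequence: $\overline{X}$ has a complicated, non-smooth boundary, several degenerations may occur simultaneously on a deep stratum, and there the edge lengths $\ell_e$ blow up, so the Schl\"afli formula $2\,dV=-\sum_e\ell_e\,d\theta_e$ does not bound $V$ directly. My approach would be to renormalize the edge lengths near ideal vertices using horospheres, in the spirit of Milnor's computation for ideal tetrahedra, to show that the renormalized Schl\"afli form stays controlled near each boundary point, to conclude that $V$ is locally uniformly continuous there, and finally to match the limits coming from the various strata on their overlaps. Checking this last compatibility --- that every combinatorial degeneration pattern feeds the same value into $\overline{V}$ --- is where the real work lies.
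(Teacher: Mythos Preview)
The paper does not prove this theorem at all: it is quoted from Luo's paper \cite{Lu1} and used as a black box input to the proof of part 3 of Theorem~\ref{main}. Immediately after the statement the author writes that Luo's original result is in fact more general (arbitrary dimension, hyperbolic and spherical) and refers the reader to \cite{Lu1} for details. So there is no ``paper's own proof'' to compare your proposal against.

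As for your sketch itself: it is a reasonable high-level outline of the ingredients one expects --- boundedness of $V$, the Schl\"afli formula, explicit dilogarithm/orthoscheme formulas, and a stratification of $\overline{X}$ according to how the Gram matrix degenerates --- and it is in the spirit of what Luo does in \cite{Lu1}, where the continuous extension is obtained by a careful analysis of the Schl\"afli differential on the compactified parameter space. But your write-up is an outline, not a proof: you correctly identify the crux (uniform control of the renormalized Schl\"afli form near deep strata, and compatibility of limits across overlapping degenerations) and then stop precisely there. If you actually wanted to supply a self-contained proof, that is exactly the part you would need to write out; citing \cite{Lu1}, as the paper does, is the honest alternative.
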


We point out that Luo's original result, Theorem 1.1 in \cite{Lu1},
is more general than Theorem \ref{Luo}. It covers the cases of
simplices in arbitrary dimensions, and in both hyperbolic and
spherical geometry. See \cite{Lu1} for details.

\begin{proof}\textbf{\hspace{-10pt} of 3. of Theorem \ref{main}}

Let $Z_{\rho}$ be a solution to the hyperbolic gluing equation
constructed in Section \ref{existence}. By Definition \ref{vol},

\begin{equation*}
\begin{split}
 Vol(Z_{\rho})=&\sum_{\sigma\in T}Vol(z_{\rho,\sigma})\\
 =&\sum_{\sigma\in \tri_D}Vol_{\mathbb{H}^3}(f_{\infty}(\sigma))\\
\end{split}
\end{equation*}

Since $f_t\co\widetilde{M}\rightarrow\mathbb{H}^3$ constructed in
Section \ref{spinning} is $\rho$-equivariant, $\forall t\in
[0,+\infty)$, as defined by Dunfield in Section 2.5 of \cite{Du},

\begin{equation*}
\begin{split}
Vol(\rho)=&\int_Df_t^*(dVol_{\mathbb{H}^3})\\
=&\sum_{\sigma\in\tri_D}\int_{\sigma}f_t^*(dVol_{\mathbb{H}^3})\\
=&\sum_{\sigma\in\tri_D}Vol_{\mathbb{H}^3}(f_t(\sigma)),\ \ \forall
t\in [0,+\infty).\\
\end{split}
\end{equation*}

For any $3$-simplex $\sigma\in\tri_D$, since $f_t|_{\sigma}$
pointwise converges to $f_{\infty}|_{\sigma}$, the angle matrices
$[a_{ij}(f_t(\sigma))]_{6\times6}$ converges to
$[a_{ij}(f_{\infty}(\sigma))]_{6\times6}\in\overline{X}$. By Theorem
\ref{Luo},

\begin{equation*}
Vol_{\mathbb{H}^3}(f_{\infty}(\sigma))=\lim_{t\rightarrow
+\infty}Vol_{\mathbb{H}^3}(f_t(\sigma)).
\end{equation*}

Therefore,
\begin{equation*}
\begin{split}
 Vol(Z_{\rho})=&\sum_{\sigma\in \tri_D}Vol_{\mathbb{H}^3}(f_{\infty}(\sigma))\\
=&\sum_{\sigma\in \tri_D}\lim_{t\rightarrow+\infty}Vol_{\mathbb{H}^3}(f_t(\sigma))\\
=&\lim_{t\rightarrow+\infty}\sum_{\sigma\in \tri_D}Vol_{\mathbb{H}^3}(f_t(\sigma))\\
=&\lim_{t\rightarrow +\infty}Vol(\rho)\\
=&Vol(\rho)\\
\end{split}
\end{equation*}
\end{proof}

%
%
%
\section*{Acknowledgment}

Research of the author is partially supported by the NSF. The author
would like to thank Feng Luo and Stephan Tillmann for useful
discussions, Feng Luo for encouraging him to write up this result,
and Stephan Tillmann for pointing out an error in an earlier draft,
sharing important ideas and providing instructive suggestions
generously.

\bigskip
\noindent
Tian Yang\\
Department of Mathematics, Rutgers University\\
New Brunswick, NJ 08854, USA\\
(tianyang@math.rutgers.edu)

\end{document}